\newcommand\reallywidehat[1]{%
	\savestack{\tmpbox}{\stretchto{%
			\scaleto{%
				\scalerel*[\widthof{\ensuremath{#1}}]{\kern-.6pt\bigwedge\kern-.6pt}%
				{\rule[-\textheight/2]{1ex}{\textheight}}
			}{\textheight}%
		}{0.5ex}}%
	\stackon[1pt]{#1}{\tmpbox}%
}
\renewcommand*{\backref}[1]{}
\renewcommand*{\backrefalt}[4]{%
	\ifcase #1 (Not cited.)%
	\or        (Cited on page~#2.)%
	\else      (Cited on pages~#2.)%
	\fi}
\newcommand{\K}{K\"ahler}
\DeclareMathOperator{\reg}{reg}
\DeclareMathOperator{\FS}{FS}
\numberwithin{equation}{section}
\def\eqref#1{(\ref{#1})}
\newcommand{\Z}{{\mathbb Z}}
\newcommand{\C}{{\mathbb C}}
\newcommand{\R}{{\mathbb R}}
\newcommand{\ei}{\textup{i}}
\newcommand{\del}{\partial}
\newcommand{\delb}{\overline{\partial}}
\def\1{\sqrt{-1}\:}
\newcommand{\cntrct}                
{\hspace{2pt}\raisebox{1pt}{\text{$\lrcorner$}}\hspace{2pt}}
\newcommand{\codim}{\operatorname{codim}}
\renewcommand{\dim}{\operatorname{dim}}
\newcommand{\eg}{{\em e.g. }}
\renewcommand{\to}{\longrightarrow}
\newcounter{Mycounter}[section]
\newcounter{lemma}[section]
\newcounter{claim}[section]
\newcounter{sublemma}[section]
\newcounter{corollary}[section]
\newcounter{theorem}[section]
\newcounter{conjecture}[section]
\newcounter{proposition}[section]
\newcounter{definition}[section]
\newcounter{example}[section]
\newcounter{remark}[section]
\newcounter{problem}[section]
\newcounter{question}[section]
\tikzset{join/.code=\tikzset{after node path={%
			\ifx\tikzchainprevious\pgfutil@empty\else(\tikzchainprevious)%
			edge[every join]#1(\tikzchaincurrent)\fi}}}
\tikzset{>=stealth',every on chain/.append style={join},
	every join/.style={->}}
\newtheorem*{rep@theorem}{\rep@title}
\newcommand{\newreptheorem}[2]{%
	\newenvironment{rep#1}[1]{%
		\def\rep@title{\ref{##1}}%
		\begin{rep@theorem}}%
		{\end{rep@theorem}}}
\begin{document}
	
	\newpage
	
	\title[Vaisman's theorem and local reducibility]{Vaisman's theorem and local reducibility}

\author{Ovidiu Preda}
\address{Ovidiu Preda \newline
	\textsc{\indent University of Bucharest, Faculty of Mathematics and Computer Science\newline 
		\indent 14 Academiei Str., Bucharest, Romania\newline
		\indent \indent and\newline
		\indent Institute of Mathematics ``Simion Stoilow'' of the Romanian Academy\newline 
		\indent 21 Calea Grivitei Street, 010702, Bucharest, Romania}}
\email{ovidiu.preda@fmi.unibuc.ro; ovidiu.preda@imar.ro}

\author{Miron Stanciu}
\address{Miron Stanciu \newline
	\textsc{\indent University of Bucharest, Faculty of Mathematics and Computer Science\newline 
		\indent 14 Academiei Str., Bucharest, Romania\newline
		\indent \indent and \newline
		\indent Institute of Mathematics ``Simion Stoilow'' of the Romanian Academy\newline 
		\indent 21 Calea Grivitei Street, 010702, Bucharest, Romania}}
\email{miron.stanciu@fmi.unibuc.ro; miron.stanciu@imar.ro}

\thanks{Both authors were partially supported by a grant of Ministry of Research and Innovation, CNCS - UEFISCDI, project no.
	PN-III-P1-1.1-TE-2021-0228, within PNCDI III. \\\\[.1cm]
	{\bf Keywords:} K\" ahler space, locally conformally \K \ space, locally reducible \\
	{\bf 2020 Mathematics Subject Classification:} 32S45; 53C55.
}

\date{\today}

	\begin{abstract}
		As proven in a celebrated theorem due to Vaisman, pure locally conformally \K \ metrics do not exist on compact \K\ manifolds. In a previous paper, we extended this result to the singular setting, more precisely to \K\ spaces which are locally irreducible. Without the additional assumption of local irreducibility, there are counterexamples for which Vaisman's theorem does not hold. In this article, we give a much broader sufficient condition under which Vaisman's theorem still holds for compact \K\ spaces which are locally reducible.
	\end{abstract}
	
	\maketitle
	
	\hypersetup{linkcolor=blue}
	\tableofcontents

	\section{Introduction}

    A key challenge in complex geometry is identifying suitable Hermitian metrics whose existence leads to nice geometric or topological results, but that also occur on a large enough class of spaces. While the previous century has seen extensive research of \K \ metrics on manifolds, their existence imposes significant analytic and topological restrictions, so in order to satisfy the second requirement, much work has been done recently to weaken the \K \ condition and so look at interesting classes of special non-\K \ metrics. Among the most studied such metrics are called \textit{locally conformally \K} introduced by Vaisman in \cite{vaisman1976}.

    Locally conformally \K \ (\textit{lcK} for short) manifolds are, as the name implies, complex manifolds which admit a $(1, 1)$-form $\omega$, such that, locally, there exist smooth functions $f$, for which $e^{-f}\omega$ is \K. This immediately implies that the differentials $df$ agree on the intersections of these sets and glue up to a closed $1$-form $\theta$, so the lcK condition can be expressed equivalently as $d\omega=\theta\wedge\omega$. The closed $1$-form $\theta$ is called the Lee form. By definition, $\theta$ is exact if and only if $f$ can be defined globally. In this case, $\omega$ is called \textit{globally conformally K\" ahler} (\textit{gcK}). If $\omega$ is lcK, but not gcK, we call it \textit{pure lcK}. Later, Vaisman \cite{vaisman1980} proved that on a compact complex manifold, pure lcK and \K\ metrics (with respect to the same complex structure) cannot coexist. For an extensive and up-to-date survey of lcK smooth geometry, see \cite{OV_book}.
 
	Just as Grauert extended the definition of \K\ metrics to complex analytic spaces in \cite{grauert}, one can also define in a similar manner lcK metrics on complex spaces with singularities (see \cite{PS21}). Spaces carrying such metrics may occur naturally even if one is only considering problems of a smooth nature, for example as a non-generic fibre of a bundle whose total space is lcK, so or continuing goal over the last few years was generalizing as many results as possible to this much broader class. Among the known facts about lcK manifolds, Vaisman's theorem is perhaps the most powerful. However, in this setting one no longer has access to many basic tools that were very useful in the smooth case, for instance Hodge theory or the $\del \delb$-Lemma; every proof of Vaisman's theorem in the smooth case makes use of one of these results. 
 
    In \cite{PS23}, making use of the Hironaka desingularization procedure, we were able to show that Vaisman's theorem still holds under the additional assumption that the space is locally irreducible, and also give a counterexamples to show that at least for some locally reducible spaces, it does not. In this article, we continue this work to a much finer analysis of the analytic and topological properties a complex space must have in order for Vaisman's theorem to fail; in our main theorem, we improve our previous result in \cite{PS23} to find a much larger class of complex spaces for which Vaisman's theorem holds.

    \medskip
	
	The paper is organized as follows: in Section \ref{sec:prelim}, we recall the basic definitons, previous results of ours, as well as a few additional facts that we will need later. In Section \ref{sec:main}, in addition to a few auxiliary results, we state and prove our characterization of the class complex \K \ spaces on which Vaisman's theorem still works. We end with Section \ref{sec:examples}, in which we give two classes of examples: one with a space that is not locally irreducible, but satisfies our new conditions, so Vaisman's Theorem applies, and one giving a typical scenario in which Vaisman's theorem still fails to work.
	
	\section{Preliminaries}\label{sec:prelim}
	
	Firstly, we recall the definitions for \K\ and lcK metrics on complex analytic spaces.
	
		\begin{definition}\label{K+lcK}
		Let $X$ be a complex analytic space. 
		\begin{enumerate}
			\item[\textbf{(K)}] A \textit{K\" ahler metric} on $X$ is the equivalence class $\reallywidehat{(U_i,\varphi_i)_{i\in I}}$ of a family such that $(U_i)_{i\in I}$ is an open cover of $X$, $\varphi_i:U_i\rightarrow \mathbb{R}$ is $\mathcal{C}^\infty$ and strictly psh, and $\ei\partial\overline{\partial}\varphi_i=\ei\partial\overline{\partial}\varphi_j$ on $U_i\cap U_j\cap X_{\reg}$, for every $i,j\in I$. 
			Two such families are equivalent if their union verifies the compatibility condition on the intersections, described above.
			
			\item[\textbf{(lcK)}] An \textit{lcK metric} on $X$ is the equivalence class $\reallywidehat{(U_i,\varphi_i,f_i)_{i\in I}}$ of a family such that $(U_i)_{i\in I}$ is an open cover of $X$, $\varphi_i:U_i\rightarrow \mathbb{R}$ is $\mathcal{C}^\infty$ and strictly psh, $f_i:U_i\rightarrow\mathbb{R}$ is smooth, and $\ei e^{f_i}\partial\overline{\partial}\varphi_i=\ei e^{f_j}\partial\overline{\partial}\varphi_j$ on $U_i\cap U_j\cap X_{\reg}$, for every $i,j\in I$. 
			As before, two such families are equivalent if their union verifies the compatibility condition on the intersections.
		\end{enumerate}
	\end{definition}
	
	Since for lcK forms on singular spaces we also want to define its associated Lee form, we have the following definition of a similar nature:
	
	\begin{definition}\label{TC-1-form-definition}
		\begin{itemize}
			\item Let $X$ be a topological space and consider $(U_i,f_i)_{i\in I}$, consisting of an open cover $(U_i)_{i\in I}$ of $X$ and a family of continuous functions $f_i:U_i\rightarrow\mathbb{R}$ such that $f_i-f_j$ is locally constant on $U_i\cap U_j$, for all $i,j\in I$. The class  
			\[
			\theta =\reallywidehat{(U_i,f_i)_{i\in I}}\in  \check{\mathrm{H}}^0\left(X,\faktor{\mathscr{C}}{\underline{\R}}\right)
			\]
			is called a \textit{topologically closed 1-form} (\textit{TC1-form}). 
			\item We say that a TC1-form $\theta$ is \textit{exact} if $\theta = \widehat{(X, f)}$ for a continuous function $f:X \rightarrow \mathbb{R}$. In this case, we make the notation $\theta = df$.
			\item 	Let $\omega=\reallywidehat{(U_i,\varphi_i,f_i)_{i\in I}}$ be an lcK metric on a complex space $X$. Then, the TC1-form $\theta=\reallywidehat{(U_i,f_i)_{i\in I}}$ is called the \textit{Lee form} of $\omega$. If $\theta$ is exact, then $\omega$ is called \textit{globally conformally K\" ahler (gcK)}.
		\end{itemize}
	\end{definition}
	
	\vspace{5pt}
	
	An essential ingredient for our main result is Vaisman's theorem for locally irreducible, compact \K\ spaces \cite[Thm.4.4]{PS23}. Since it stays true with exactly the same proof even for wlcK spaces (compare with \cite[Lemma 2.5]{APV23}, the statement below includes this case.
	
	\begin{theorem}\label{Vaisman_wlck}
		Let $(X,\omega,\theta)$ be a compact, locally irreducible, (w)lcK space. If $X$ admits a \K\ metric, then $(X,\omega,\theta)$ is (w)gcK.
	\end{theorem}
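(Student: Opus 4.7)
The plan is to reduce to the smooth case via a resolution of singularities, and then use local irreducibility to descend the primitive of the Lee form from the resolution back to $X$. Let $\pi\colon\widetilde{X}\to X$ be a Hironaka resolution; since $X$ is compact \K\ and $\pi$ is a proper modification, standard arguments (as used in \cite{PS23}) show that $\widetilde{X}$ is a compact \K\ manifold. The pullback $\pi^*\omega$, while only (weakly) positive and possibly degenerate along the exceptional divisor, is still a smooth $(1,1)$-form on $\widetilde X$ satisfying the (w)lcK identity $d(\pi^*\omega)=\pi^*\theta\wedge\pi^*\omega$, where $\pi^*\theta$ denotes the pulled-back Lee form, which is an honest closed smooth $1$-form on $\widetilde X$ (the local potentials $f_i$ are smooth on $U_i$, hence $f_i\circ\pi$ is smooth on $\widetilde X$).

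Next I would invoke the smooth Vaisman-type statement on the compact \K\ manifold $\widetilde X$ (the version for wlcK forms, as established e.g.\ in \cite{APV23}) to conclude that the de Rham class of $\pi^*\theta$ vanishes. Thus there exists a smooth function $F\colon\widetilde X\to\R$ with $dF=\pi^*\theta$; in particular, on each $\pi^{-1}(U_i)$, the function $F-f_i\circ\pi$ is locally constant.

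The main obstacle, and the step where local irreducibility of $X$ enters decisively, is to descend $F$ to a continuous function $f$ on $X$ with $df=\theta$. For this I would argue that local irreducibility is equivalent to $\pi$ being unibranch, i.e.\ the fibers $\pi^{-1}(x)$ are connected for every $x\in X$. Given connectedness of fibers, the restriction of $F-f_i\circ\pi$ to $\pi^{-1}(x)$ for $x\in U_i$ is constant, so $F$ is constant along each fiber. Since $\pi$ is a proper surjection from a Hausdorff space, it is a topological quotient, so $F$ descends to a continuous function $f\colon X\to\R$ with $f\circ\pi=F$. One then checks on each $U_i$ that $f-f_i$ is locally constant (because $(f-f_i)\circ\pi=F-f_i\circ\pi$ is locally constant and $\pi$ is surjective with connected fibers), which by Definition~\ref{TC-1-form-definition} means exactly that $\theta=df$ as a TC1-form.

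The only delicate points are: verifying that $\widetilde X$ is genuinely \K\ (not merely that $\pi^*\alpha$ is semi-positive, for a \K\ metric $\alpha$ on $X$), and justifying the application of the smooth Vaisman theorem when $\pi^*\omega$ is degenerate along the exceptional divisor; both are handled exactly as in \cite[Thm.\,4.4]{PS23}, with the wlcK version of the smooth Vaisman theorem providing the extra flexibility needed to cover the wlcK case. Everything else is topological bookkeeping allowed by local irreducibility.
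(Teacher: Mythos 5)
Your proposal is correct and follows essentially the same route as the paper's proof of this statement, which is by invoking \cite[Thm.~4.4]{PS23} verbatim (with \cite[Lemma 2.5]{APV23} supplying the wlcK case): Hironaka desingularization to a compact \K\ manifold, the smooth (w)lcK Vaisman theorem to produce a global primitive of the pulled-back Lee form, and descent of that primitive to $X$ using the connectedness of the fibers of the resolution, which is exactly what local irreducibility guarantees. The two delicate points you flag (K\"ahlerianity of the resolution and degeneracy of $\pi^*\omega$ along the exceptional divisor) are indeed the ones handled in \cite{PS23}, so there is no gap.
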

	
	\vspace{5pt}
	
	We also need the next result on the existence of \K\ metrics on spaces which project with discrete fibers on \K\ spaces:
	
	\begin{theorem}(\cite[Thm.1]{vaj96})\label{Vajaitu_discrete_fibers}
		Let $f:X\rightarrow Y$ be a holomorphic map of complex spaces with discrete fibers. If $Y$ is K\" ahlerian, then $X$ is K\" ahlerian too.
	\end{theorem}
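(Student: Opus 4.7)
The plan is to use the discrete-fiber hypothesis to realize $X$ locally as a closed analytic subspace of a \K\ product $Y\times\C^N$, and then to glue the resulting local \K\ potentials by a partition-of-unity correction. Given $x\in X$ with $y:=f(x)$, pick a neighborhood $V$ of $y$ on which the \K\ metric of $Y$ is given by a smooth strictly plurisubharmonic potential $\psi$. By discreteness of $f^{-1}(y)$, shrink to a neighborhood $U\ni x$ so that $f(U)\subseteq V$ and $f^{-1}(y)\cap U=\{x\}$. Since $X$ is a complex analytic space, we may further arrange a closed embedding $g\colon U\hookrightarrow\Omega\subseteq\C^N$; then $(f,g)\colon U\hookrightarrow V\times\Omega$ is itself a closed embedding. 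The product carries the \K\ potential $\psi+|z|^2$, and pulling back yields the smooth, strictly plurisubharmonic function $\varphi_U:=\psi\circ f+|g|^2$ on $U$, a candidate local \K\ potential in the sense of Definition~\ref{K+lcK}.

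The main obstacle is compatibility on overlaps. With two such charts $U_\alpha, U_\beta$,
\[
\varphi_\alpha-\varphi_\beta\;=\;(\psi_\alpha-\psi_\beta)\circ f\;+\;\bigl(|g_\alpha|^2-|g_\beta|^2\bigr);
\]
the first summand is pluriharmonic (since $\psi_\alpha-\psi_\beta$ is pluriharmonic on $V_\alpha\cap V_\beta$ by the \K\ compatibility on $Y$, and holomorphic pullback preserves pluriharmonicity), while the second typically is not, so the candidates $\{\varphi_\alpha\}$ fail the compatibility condition. To remedy this, pass to a locally finite refinement with subordinate smooth partition of unity $\{\rho_\alpha\}$ on $X$, and introduce the globally defined smooth function $\chi:=\sum_\alpha\rho_\alpha|g_\alpha|^2$. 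Replacing each local potential by $\tilde\varphi_\beta:=\psi_\beta\circ f+\chi$ gives overlap differences $\tilde\varphi_\alpha-\tilde\varphi_\beta=(\psi_\alpha-\psi_\beta)\circ f$, which are pluriharmonic, so Grauert's compatibility is restored.

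The hard part is then to verify that each $\tilde\varphi_\beta$ is strictly plurisubharmonic on $U_\beta$ despite the error terms coming from the derivatives of the cutoffs $\rho_\alpha$. Here the discrete-fiber hypothesis enters decisively: at a smooth point $x\in X_{\reg}$ lying over $Y_{\reg}$ the rank theorem forces $df_x$ to be injective, so $\ei\del\delb(\psi\circ f)=f^*\omega_Y$ is already positive definite at $x$; and at every point of $U_\beta$ at least one summand $|g_\alpha|^2$ is the squared norm of a local closed embedding near that point, contributing strict positivity in the ambient sense. Scaling $\chi$ by a sufficiently large constant absorbs the bounded partition-of-unity errors, producing a genuinely strictly plurisubharmonic modified potential on each chart and hence the desired \K\ metric on $X$. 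Carrying out this last estimate carefully -- in the ambient sense, compatible with the definition of strict plurisubharmonicity on a singular complex space -- is the technical heart of Vajaitu's argument.
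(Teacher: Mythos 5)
First, a caveat: the paper offers no proof of this statement --- it is quoted verbatim from V\^aj\^aitu's article, so your argument can only be judged on its own merits. Your overall strategy (realize $X$ locally as a closed subspace of $V\times\Omega\subseteq Y\times\C^N$ via $(f,g)$, take $\psi\circ f+|g|^2$ as a candidate potential, and repair the overlap cocycle with a partition of unity) is the natural one and is close in spirit to the Varouchas-type gluing on which V\^aj\^aitu's proof is based; your diagnosis that only the $|g_\alpha|^2-|g_\beta|^2$ terms obstruct compatibility, and the formal fix $\tilde\varphi_\beta=\psi_\beta\circ f+\chi$, are correct as far as the cocycle condition is concerned.

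The positivity step, however, which you yourself flag as the technical heart, is wrong as sketched, on two counts. First, discreteness of the fibers does \emph{not} force $df_x$ to be injective at smooth points: $z\mapsto z^2$ on $\C$ has discrete fibers and vanishing differential at the origin, and the rank theorem does not apply because the rank of $df$ need not be locally constant. Hence $\ei\del\delb(\psi\circ f)=f^*\omega_Y$ is only positive \emph{semi}definite and genuinely degenerates along the critical locus of $f$, which is nonempty for any ramified map. Second, ``scaling $\chi$ by a sufficiently large constant'' achieves nothing: replacing $\chi$ by $C\chi$ multiplies all of $\ei\del\delb\chi$ --- including the indefinite error terms coming from $\del\rho_\alpha$, $\delb\rho_\alpha$ and $\del\delb\rho_\alpha$ multiplied by $|g_\alpha|^2$ and $g_\alpha\,\delb\bar g_\alpha$ --- by the same factor $C$, so the bad part grows exactly as fast as the good part $\sum_\alpha\rho_\alpha\,\ei\,\del g_\alpha\wedge\delb\bar g_\alpha$. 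The only term one could usefully rescale is $\psi\circ f$ (this keeps the overlap differences pluriharmonic), but by the first point $Cf^*\omega_Y$ cannot dominate a fixed negative error in the directions where $df$ degenerates. So at a critical point of $f$ there is no reason for $\tilde\varphi_\beta$ to be strictly plurisubharmonic, and the argument does not close. Fixing this requires a genuinely different gluing device (this is precisely what Varouchas' gluing lemma, invoked by V\^aj\^aitu, supplies), not a choice of constants.
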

	
	\medskip
	
	The following theorem by Henri Cartan \cite[Main Theorem]{HCartan60} gives a sufficient condition under which the quotient of a complex space by a proper equivalence relation also has a structure of complex space. It will be needed for constructing the examples in section \ref{sec:examples}.
	
	\begin{theorem}\label{Cartan_criterion}
		Consider a proper equivalence relation $R$ on a complex space $X$, with quotient map $\pi:X\rightarrow X/R$. In order that the ringed space $X/R$ be a complex space, it suffices that each point of $X/R$ has an open neighborhood $V$ such that the $R$-invariant holomorphic maps $\pi^{-1}(V)\rightarrow Z$ ($Z$ being a complex space) separate the equivalence classes in $\pi^{-1}(V)$.
	\end{theorem}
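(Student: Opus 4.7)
The plan is to work locally on the quotient and use the hypothesized $R$-invariant separating holomorphic maps to realize a neighborhood of each point of $X/R$ as an analytic subset of some complex space, then identify the inherited ringed-space structure with the natural quotient structure on $X/R$.

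Fix $\bar{x}_0 \in X/R$ and let $C = \pi^{-1}(\bar{x}_0)$, which is compact by properness of $R$. By hypothesis there is a saturated open neighborhood $\pi^{-1}(V) \supset C$ on which the totality of $R$-invariant holomorphic maps to complex spaces separates equivalence classes. The first step is to extract, by a compactness argument on $C \times C$ combined with the fact that the relation $R$ is closed in $X \times X$, finitely many such maps $F_1, \ldots, F_k$ whose bundled version $F := (F_1, \ldots, F_k) \colon \pi^{-1}(V') \to Z := Z_1 \times \cdots \times Z_k$ still separates equivalence classes on some smaller $R$-saturated open neighborhood $\pi^{-1}(V')$ of $C$.

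Since $F$ is $R$-invariant, it factors as $F = \bar{F} \circ \pi$ with $\bar{F} \colon V' \to Z$ injective. After shrinking $V'$ to a relatively compact saturated open subset, the properness of $R$ makes $\pi$ a closed quotient map, and consequently $\bar{F}$ becomes a topological embedding onto its image. The main analytic content of the argument — and the step I expect to be the principal obstacle — is to prove that $\bar{F}(V')$ is an analytic subset of $Z$. Here the plan is to verify, on a further smaller saturated neighborhood, that the map $F$ is proper, and then to invoke Remmert's proper mapping theorem to deduce that $F(\pi^{-1}(V'))$ is analytic in $Z$.

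Once the image is realized as an analytic subset, the final step is to match sheaves. The structure sheaf on $X/R$ is defined by declaring a continuous function on $W \subset X/R$ holomorphic if and only if its pullback along $\pi$ is holomorphic on $\pi^{-1}(W)$. Via the embedding $\bar{F}$, each holomorphic function on an open subset of $\bar{F}(V')$ pulls back to an $R$-invariant holomorphic function on the corresponding open subset of $\pi^{-1}(V')$, and conversely any $R$-invariant holomorphic function on $\pi^{-1}(V')$ descends through $\bar{F}$ to a function which, using the coordinates supplied by $F$, extends locally to a holomorphic function on an open set in $Z$. This identification of sheaves equips $V'$ with a complex-space structure isomorphic to the analytic subset $\bar{F}(V') \subset Z$, and so $X/R$ inherits the structure of a complex analytic space.
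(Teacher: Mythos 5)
This statement is not proved in the paper at all: it is Cartan's theorem, quoted verbatim from \cite{HCartan60} as a black box used only to construct the examples in Section 4. So there is no internal proof to compare against; what you have written is a sketch of Cartan's original argument, and it does follow his general strategy (extract a finite separating system of invariant maps, realize a saturated neighborhood of a fibre as an analytic subset of a model space, then match structure sheaves). Two of your steps, however, are miscast, and one contains a genuine gap.

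First, the compactness argument ``on $C\times C$'' cannot work as stated: $C\times C$ is entirely contained in $R$, so there is nothing to separate there, and the set of non-equivalent pairs in $\pi^{-1}(\overline{V'})\times\pi^{-1}(\overline{V'})$ is \emph{open}, so a finite subcover argument does not apply to it. The finiteness of the system $F_1,\dots,F_k$ is instead obtained from the descending chain condition for the analytic coincidence sets $(F\times F)^{-1}(\Delta_Z)$ in a neighborhood of the compact set $\pi^{-1}(\overline{V'})\times\pi^{-1}(\overline{V'})$. Second, $F$ is never proper as a map into $Z$, however much you shrink the saturated source; the standard fix is to also shrink the target to $W=Z\setminus F\bigl(\pi^{-1}(\overline{V'}\setminus V')\bigr)$, after which Remmert does apply. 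These are repairable. The real gap is the final sheaf identification: it is simply false that every $R$-invariant holomorphic function on $\pi^{-1}(V')$ is locally of the form $h\circ F$ with $h$ holomorphic on an open subset of $Z$. The image $A=\bar F(V')$ may be a non-normal model of the quotient (think of $z\mapsto(z^2,z^3)$ embedding a disc onto a cuspidal curve: the invariant function $z$ does not factor holomorphically through the image), so the transported sheaf of invariant functions is in general strictly larger than $\mathcal{O}_A$. Cartan's proof closes this gap by showing that the transported sheaf sits between $\mathcal{O}_A$ and its integral closure and then invoking Oka's coherence theorem for the normalization to conclude that $A$ equipped with this intermediate sheaf is still a complex space. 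Without an argument of this kind, your last paragraph asserts the hardest part of the theorem rather than proving it.
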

	
	\vspace{10pt}
	
	In the same constructions, we will make use of \cite[Prop. 2.1 \& Cor.2.2]{CM85} about pushing forward (strictly) psh functions:
	
	\begin{proposition}\label{PROP-Coltoiu-Mihalache-s-psh}
		Let $X, Y$ be complex spaces and $p:X\rightarrow Y$ a proper, surjective, holomorphic map. Let $\phi:Y\to [-\infty,\infty)$ be an upper semicontinous function such that $\phi\circ p$ is (strictly) psh on $X$. Then, $\phi$ is (strictly) psh on $Y$.
	\end{proposition}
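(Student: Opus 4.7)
The plan is to apply the Fornaess--Narasimhan characterization of plurisubharmonicity: on a reduced complex space, an upper semicontinuous function is psh if and only if its restriction to every holomorphic disc is subharmonic. Since $\phi$ is usc by hypothesis, the task reduces to showing, for each holomorphic $\psi \colon \Delta \to Y$, that $v := \phi \circ \psi$ is subharmonic on the unit disc $\Delta$. I would form the reduced fiber product $W := (X \times_Y \Delta)^{\mathrm{red}}$ with projections $q \colon W \to X$ and $\pi \colon W \to \Delta$; properness and surjectivity of $p$ are inherited by $\pi$, while $u := q^{\ast}(\phi \circ p) = v \circ \pi$ is psh on $W$.

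Next, I would desingularize $\rho \colon \widetilde{W} \to W$, set $\tilde{\pi} := \pi \circ \rho$ and $\tilde{u} := v \circ \tilde{\pi} = \rho^{\ast} u$. Then $\widetilde{W}$ is a complex manifold, $\tilde{\pi}$ is still proper surjective onto $\Delta$, and $\tilde{u}$ remains psh (pullback of psh via a holomorphic map). Because $\Delta$ is one-dimensional, the critical locus of $\tilde{\pi}$ has its image $F \subset \Delta$ discrete, hence $\tilde{\pi}$ admits local holomorphic sections on $\Delta \setminus F$. Composing $\tilde{u}$ with such a section shows $v|_{\Delta \setminus F}$ is subharmonic, and the usc regularization $v^{\ast}$ then extends to a subharmonic function on all of $\Delta$ by the removable singularity theorem for subharmonic functions.

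It remains to show $v = v^{\ast}$ at every $z_0 \in F$. The inequality $v \geq v^{\ast}$ follows from upper semicontinuity. For the reverse, fix any $\tilde{x}_0 \in \tilde{\pi}^{-1}(z_0)$; then $\tilde{u}(\tilde{x}_0) = v(z_0)$. Since $\tilde{u}$ is psh, the sub-mean inequality forces $\tilde{u}$ to equal its own usc regularization, and the regular locus of $\tilde{\pi}$ is open and dense in $\widetilde{W}$. Together these produce a sequence $\tilde{x}_n \to \tilde{x}_0$ lying in the regular locus with $\tilde{u}(\tilde{x}_n) \to \tilde{u}(\tilde{x}_0)$. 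Setting $z_n := \tilde{\pi}(\tilde{x}_n) \in \Delta \setminus F$ gives $z_n \to z_0$ and $v(z_n) \to v(z_0)$, hence $v(z_0) \leq v^{\ast}(z_0)$, completing the psh case.

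For the strictly psh case, locally on $Y$ choose a smooth strictly psh function $\rho$ (via a local embedding into $\C^N$ composed with $|\cdot|^2$). Strict psh-ness of $\phi \circ p$ means $\phi \circ p - \varepsilon\,\rho \circ p = (\phi - \varepsilon \rho) \circ p$ is psh on $X$ for some $\varepsilon > 0$; applying the psh case gives $\phi - \varepsilon \rho$ psh on $Y$, so $\phi$ is strictly psh. The main technical hurdle is the extension step across $F$: producing the sequence $\tilde{x}_n$ in the regular locus with $\tilde{u}(\tilde{x}_n) \to \tilde{u}(\tilde{x}_0)$, which requires combining the usc-regularity of psh functions with the density of regular points in a careful way precisely when $\tilde{x}_0$ itself lies in the critical locus of $\tilde{\pi}$.
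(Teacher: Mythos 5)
The paper does not actually prove this proposition --- it is quoted verbatim from Col\c toiu--Mihalache \cite{CM85}, so there is no internal proof to compare against. Your argument is the standard one (Fornaess--Narasimhan disc criterion, base change to a disc, resolution, local sections off a discrete set, removable singularities for subharmonic functions), it is correct in outline, and it is very much in the spirit of the original.

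Two points need tightening. First, in the extension step you write ``fix any $\tilde{x}_0 \in \tilde{\pi}^{-1}(z_0)$,'' but the choice matters: $W$ (hence $\widetilde{W}$) may have irreducible components that $\pi$ contracts to single points of $\Delta$, and if $\tilde{x}_0$ lies in such a component every neighborhood of it maps into $F$, so no approximating sequence with $\tilde{\pi}(\tilde{x}_n) \notin F$ exists there. You must pick $\tilde{x}_0$ in a component dominating $\Delta$ (such a point exists in every fiber, since the union of dominating components is closed, maps properly onto a closed set containing the complement of the discrete set of images of contracted components, hence onto all of $\Delta$). Within that component $\tilde{\pi}^{-1}(F)$ is a proper analytic subset, hence of measure zero, and the sub-mean-value inequality over small balls then produces your sequence; note that mere density of the regular locus is not enough (a usc function can have strictly smaller $\limsup$ along a dense set), so the measure-zero property is the operative ingredient. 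Second, in the strictly psh case, ``strict psh-ness of $\phi \circ p$ means $\phi\circ p - \varepsilon\,\rho\circ p$ is psh'' is not the definition: $\rho \circ p$ is only psh (it degenerates along the fibers of $p$), so you need to compare $\ei\del\delb(\rho\circ p)$, which is bounded above by $C\omega$ on compacta for an ambient Hermitian form $\omega$, with the uniform lower bound $\ei\del\delb(\phi\circ p)\geq \delta\omega$ furnished by strict psh-ness on the compact set $p^{-1}(\overline{V'})$ (properness of $p$ is what makes this set compact); only then does a single $\varepsilon>0$ work on all of $p^{-1}(V')$ so that the weak case applies to $p\colon p^{-1}(V')\to V'$. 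With these repairs, and allowing the case $\phi\circ\psi\equiv-\infty$ in the Fornaess--Narasimhan criterion, the proof is complete.
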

	
	\vspace{10pt}
	
	An immediate consequence to this is \cite[Cor.2.3]{CM85}:
	
	\begin{corollary}\label{COR-Coltoiu-Mihalache-s-psh}
		Let $X$ be a complex space and $\phi:X\rightarrow [-\infty,\infty)$ an upper semicontinuous function. Then, $\varphi$ is (strictly) psh on $X$ iff restricted to any irreducible component of $X$ is (strictly) psh.
	\end{corollary}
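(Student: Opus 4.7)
\emph{Proof plan.} The ``only if'' direction is essentially tautological: if $\phi$ is (strictly) psh on $X$, then its restriction to any closed analytic subspace (in particular to any irreducible component) is (strictly) psh, since (strict) plurisubharmonicity on a complex space is defined via local extensions to embeddings into $\mathbb{C}^N$, and both the upper semicontinuity and the defining sub-mean-value property are preserved under restriction to a closed analytic subset.

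For the ``if'' direction, I would deduce the statement as an immediate corollary of Proposition~\ref{PROP-Coltoiu-Mihalache-s-psh}. Since (strict) plurisubharmonicity is a local property, it suffices to verify it on a sufficiently small open neighborhood $U$ of an arbitrary point $x_0\in X$. By local finiteness of the irreducible decomposition of a complex space, I may shrink $U$ so that $U=X_1\cup\cdots\cup X_k$, where $X_1,\ldots,X_k$ are the finitely many (local) irreducible components of $X$ meeting $U$. I would then form the disjoint union $Y:=\bigsqcup_{i=1}^k X_i$, equipped with the natural holomorphic map $p\colon Y\to U$ whose restriction to each summand is the inclusion $X_i\hookrightarrow U$.

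By construction $p$ is a finite disjoint union of closed immersions, hence proper, and it is surjective because the $X_i$ cover $U$. The composition $\phi\circ p$ restricts on each connected component $X_i$ of $Y$ to $\phi|_{X_i}$, which is (strictly) psh by hypothesis; therefore $\phi\circ p$ is (strictly) psh on $Y$. Since $\phi$ is assumed upper semicontinuous, Proposition~\ref{PROP-Coltoiu-Mihalache-s-psh} applied to $p$ and $\phi$ yields that $\phi$ is (strictly) psh on $U$. Letting $x_0$ vary concludes the argument.

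The only mildly subtle point, and really the only step worth spelling out explicitly in the write-up, is the reduction to a neighborhood with only finitely many irreducible components: without this reduction, the map $p$ from the disjoint union of \emph{all} global components need not be proper if $X$ has infinitely many irreducible components, and Proposition~\ref{PROP-Coltoiu-Mihalache-s-psh} could not be applied directly. Once one passes to a relatively compact neighborhood where the decomposition is finite, the rest is a one-line application of the Coltoiu--Mihalache proposition.
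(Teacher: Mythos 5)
Your argument is correct and follows exactly the route the paper intends: the paper gives no proof of its own, presenting the corollary as ``an immediate consequence'' of Proposition~\ref{PROP-Coltoiu-Mihalache-s-psh} (citing [CM85, Cor.~2.3]), and your application of that proposition to the proper surjective map from the disjoint union of the finitely many local irreducible components is precisely that deduction, with the one genuinely necessary precaution (localizing to get finiteness, hence properness) correctly identified and handled.
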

	
	\section{The main result}\label{sec:main}

	Let $X$ be a complex analytic space of dimension $\dim X=n$ and consider 
	$$X=Y_0\cup Y_1\cup\ldots \cup Y_{n}$$
	its stratification (for the construction of the stratification of a complex space, see \eg \cite[Chapter II, Prop.5.6]{Demailly_book}). $Y_0$ is a discrete set, $Y_{k-1}\subset Y_{k}$ and $Y_{k}\setminus Y_{k-1}$ is a smooth $k$-dimensional complex manifold for each $1 \leq k\leq n$. The sets $Y_{k}\setminus Y_{k-1}$ are called the strata; they might not be connected, and some might be empty for indices $k<n$. 

	To help the structure of the proof of the main theorem, we begin with two lemmas. 
	
	\begin{lemma}\label{key_lemma}
		Let $\pi:\widehat{Y}\rightarrow Y$ be a finite covering with $p$ sheets of topological spaces and $\theta$ be a TC1-form on $Y$ such that  its pull-back $\pi^*\theta$ is exact. Then, $\theta$ is exact.
	\end{lemma}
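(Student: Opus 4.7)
The natural strategy is to descend the global primitive on $\widehat{Y}$ to $Y$ by averaging over the fibers of $\pi$. Let $\theta = \widehat{(U_i, f_i)_{i \in I}}$. Since $\pi^*\theta = \widehat{(\pi^{-1}(U_i), f_i \circ \pi)_{i \in I}}$ is exact, there exists a continuous function $g: \widehat{Y} \to \mathbb{R}$ such that $g - f_i \circ \pi$ is locally constant on $\pi^{-1}(U_i)$ for every $i \in I$. I then define
\[
f: Y \to \mathbb{R}, \qquad f(y) = \frac{1}{p}\sum_{\hat{y} \in \pi^{-1}(y)} g(\hat{y}),
\]
and aim to show that $\theta = df$.

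The first step is to verify that $f$ is continuous. This is local: around any $y \in Y$ I pick a trivializing neighborhood $U$ such that $\pi^{-1}(U) = V_1 \sqcup \cdots \sqcup V_p$ with $\pi|_{V_k}: V_k \to U$ a homeomorphism, and let $s_k: U \to V_k$ be its inverse. Then on $U$ one has $f = \frac{1}{p}\sum_{k=1}^p g \circ s_k$, which is a finite sum of continuous functions, hence continuous. Note also that this expression is independent of the labeling of the sheets, so $f$ is well-defined.

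Next I verify that $f - f_i$ is locally constant on $U_i$, which is exactly the condition ensuring that $\widehat{(Y, f)}$ and $\widehat{(U_i, f_i)_{i \in I}}$ represent the same TC1-form. Shrinking $U \subset U_i$ to a trivializing neighborhood as above, each $s_k$ lands in $\pi^{-1}(U_i)$, so by the exactness hypothesis $g \circ s_k - f_i \circ \pi \circ s_k = g \circ s_k - f_i|_U$ is locally constant on $U$. Averaging these $p$ locally constant functions yields that $f - f_i = \frac{1}{p}\sum_k (g \circ s_k - f_i)$ is locally constant on a neighborhood of every point of $U_i$, as required.

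The only delicate point is ensuring that the averaging construction is compatible with the cocycle defining $\theta$, which is handled by working in trivializing neighborhoods where the sheets can be labeled and the local sections $s_k$ are available; away from such neighborhoods the sheets may get permuted, but the sum over the whole fiber is symmetric in the sheets and hence globally well-defined. Since covering projections are locally trivial by definition, this presents no obstruction, and the proof concludes by exhibiting $f$ as the desired primitive, i.e.\ $\theta = df$.
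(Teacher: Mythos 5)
Your proof is correct, but it takes a genuinely different route from the paper's. You construct the primitive explicitly by the classical transfer (fiber-averaging) argument: $f(y)=\frac{1}{p}\sum_{\hat y\in\pi^{-1}(y)}g(\hat y)$, and you correctly verify both continuity (via local trivializations) and the key identity $f-f_i=\frac{1}{p}\sum_k\bigl(g\circ s_k-f_i\bigr)$, a sum of locally constant functions, so that $\widehat{(Y,f)}$ represents $\theta$ in the sense of Definition~2.2. The paper instead argues by periods: a closed curve $\gamma$ in $Y$ lifts to a closed curve $\widehat\gamma$ covering it $q$ times, so $\int_{\widehat\gamma}\pi^*\theta=q\int_\gamma\theta=0$, and vanishing of all periods gives exactness. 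Your argument is more self-contained and works for arbitrary topological spaces: it avoids path-lifting, monodromy, and the implicit appeal to the equivalence between exactness and vanishing of all periods (which relies on the integral of TC1-forms from [PS21] and on the space being locally path-connected). The paper's period argument, on the other hand, is shorter and is set up deliberately so that it extends with no change to the ramified case in Lemma~3.2, where your averaging construction would need extra care: over the ramification locus the fiber cardinality drops, so the naive average is not obviously continuous there without counting preimages with multiplicity. So both proofs are valid for this lemma; just be aware that yours does not transfer as directly to the next lemma in the paper.
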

	\begin{proof}
		Let $\gamma$ be a closed curve in $Y$. Then, there exists a closed curve $\widehat{\gamma}$ in $\widehat{Y}$ such that the restriction $\pi:\widehat{\gamma}\rightarrow \gamma$ is a covering with $q$ sheets, $q\leq p$. Thus, with the definition of integral given in \cite[Section 2]{PS21}, we have
		$$\int_{\widehat{\gamma}}\pi^*\theta = q\int_\gamma\theta.$$
		However, since $\pi^*\theta$ is exact, we have $\int_{\widehat{\gamma}}\pi^*\theta=0$, hence $\int_\gamma \theta=0$. As this is true for every closed curve $\gamma$ in $Y$, we obtain that $\theta$ is exact.
	\end{proof}

	\vspace{10pt}
	
		\begin{lemma}\label{key_lemma_2}
		Let $\pi:\widehat{Y}\rightarrow Y$ be a ramified covering with $p$ sheets of complex spaces such that $Y$ has no singularities, and let $\theta$ be a TC1-form on $Y$ such that  its pull-back $\pi^*\theta$ is exact. Then, $\theta$ is exact.
	\end{lemma}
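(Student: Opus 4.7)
The plan is to reduce to Lemma~\ref{key_lemma} by removing the branch locus. Let $B \subset Y$ denote the branch locus of $\pi$, a complex analytic subset of $Y$. Since $Y$ is nonsingular, $B$ has real codimension at least $2$ in $Y$.

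Outside $\pi^{-1}(B)$, the map $\pi$ restricts to a finite unramified topological covering $\pi: \widehat{Y} \setminus \pi^{-1}(B) \to Y \setminus B$. The restriction of $\pi^*\theta$ to $\widehat{Y} \setminus \pi^{-1}(B)$ is still exact, as it is the restriction of an exact TC1-form to an open subset. Applying Lemma~\ref{key_lemma} to this covering, we obtain a continuous function $g: Y \setminus B \to \mathbb{R}$ with $dg = \theta|_{Y \setminus B}$.

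It then remains to extend $g$ continuously to all of $Y$; by uniqueness of the local representatives of $\theta$ up to locally constant functions, the extended function $\widetilde{g}$ will automatically satisfy $d\widetilde{g} = \theta$. For each point $y_0 \in B$, pick a connected open neighborhood $U \subset Y$ of $y_0$ on which $\theta$ is represented by a continuous function $f_U: U \to \mathbb{R}$. Shrinking $U$ if necessary, we may assume $U \setminus B$ is connected, using that $Y$ is smooth at $y_0$ and $B$ is a proper analytic subset. On $U \setminus B$, the function $g - f_U$ is locally constant, and by connectedness equals some constant $c_U$. Define $\widetilde{g}(y) := f_U(y) + c_U$ for $y \in U$; this continuously extends $g|_{U \setminus B}$ across $B \cap U$, and the resulting local extensions patch together on overlaps because they all agree with the already-defined $g$ on the dense open set $Y \setminus B$.

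The main obstacle is the extension argument, which hinges critically on the smoothness of $Y$: one needs every point of $Y$ to admit arbitrarily small neighborhoods $U$ for which $U \setminus B$ is connected. On a complex manifold this follows from the codimension-$2$ bound on the analytic subset $B$, but it would fail in the singular setting where $B$ could locally disconnect a neighborhood (for instance near a nodal point), in which case the locally constant function $g - f_U$ could take different values on different components and no coherent extension would exist. This is precisely why the hypothesis requires $Y$ to be smooth.
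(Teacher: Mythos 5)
Your proof is correct. It follows the same overall strategy as the paper --- restrict to the unramified covering over the complement of the branch locus, apply \ref{key_lemma} there, and exploit the fact that the branch locus is a proper analytic subset of the smooth space $Y$ --- but it finishes by a different device. The paper concludes by showing that all periods $\int_\gamma \theta$ vanish: any closed curve $\gamma$ in $Y$ is homotoped off the ramification locus (possible precisely because of the codimension bound), reducing to the period computation already done on the unramified part; this presupposes the criterion that a TC1-form with vanishing periods is exact. You instead construct the global primitive explicitly, extending $g$ continuously across $B$ via the local representatives $f_U$ and the connectedness of $U \setminus B$ for small $U$. Your route is slightly longer but more self-contained, avoiding the period criterion; the paper's is shorter at the cost of invoking it. Both arguments use the smoothness of $Y$ at exactly the same point --- to guarantee that removing the analytic set $B$ does not locally disconnect $Y$ --- and your closing remark correctly identifies this as the step that breaks down for singular $Y$, which is indeed why the hypothesis is there.
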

	\begin{proof}
		Denote by $S\subset Y$ the ramification locus. Since $\codim_Y S\geq 1$ and $Y$ is a complex manifold, $Y\setminus S$ is locally connected around the points of $S$. Let $\gamma \subset Y$ be a closed curve. Then, $\gamma$ is homotopically equivalent to a closed curve $\eta \subset Y\setminus S$. Hence, $\int_\gamma \theta = \int_\eta \theta$. Now, applying \ref{key_lemma} for the unramified covering 
		$$\pi:\widehat{Y}\setminus \pi^{-1}(S)\rightarrow Y\setminus S,$$
		we get that $\theta_{\restriction Y\setminus S}$ is exact, thus $\int_\eta \theta=0$, which in turn gives $\int_\gamma \theta=0$. Finally, since this is true for any closed curve $\gamma$, we get that $\theta$ is exact. 
	\end{proof}
	
	\vspace{10pt}
	
	Now, we can prove the main theorem.
	
	\begin{theorem}\label{MAIN_THM}
		Let $(X,\omega_0)$ be a globally irreducible, compact \K\ space. Denote by $\pi:\widehat{X}\to X$ its normalization and consider $$X=Y_0\cup Y_1\cup\ldots \cup Y_{n}$$ 
		the stratification of $X$. For every $0\leq k\leq n$, let
		$$Y_k=\bigcup_{j}Y_{k,j}$$ 
		be the decomposition into globally irreducible components. If for every $k$ and $j$, the set $\pi^{-1}(Y_{k,j})$ is connected, then $X$ does not admit pure lcK metrics. 

	\end{theorem}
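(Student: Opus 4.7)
The plan is to lift everything to the normalization $\pi\colon \widehat{X} \to X$, invoke Theorem~2.1 there to obtain a global primitive of the pulled-back Lee form, and then descend this primitive back to $X$ by an upward induction on the strata using the connectivity hypothesis. Concretely: since $\pi$ is finite (hence has discrete fibers) and $X$ is K\"ahler, Theorem~2.2 gives that $\widehat{X}$ is K\"ahler. As $X$ is globally irreducible, so is $\widehat{X}$; being a normalization, it is also normal (hence locally irreducible) and compact. Theorem~2.1 applied to $(\widehat{X},\pi^*\omega,\pi^*\theta)$ yields $\pi^*\theta = df$ for some continuous $f\colon \widehat{X} \to \R$. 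The problem reduces to showing $f$ is constant on every fiber $\pi^{-1}(x)$, since then $f$ descends to a continuous $g\colon X \to \R$ (continuity from properness of $\pi$), and $\pi^*(dg) = df = \pi^*\theta$ forces $\theta = dg$.

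I would prove fiber-constancy by upward induction on $k$, with hypothesis $H(k)$: \emph{$f$ is constant on $\pi^{-1}(x)$ for every $x \in Y_k$}. The base case $k=0$ is immediate: each $\pi^{-1}(\{p_j\})$ is a finite discrete connected set, hence a singleton. For the step $H(k-1)\Rightarrow H(k)$, take $x\in Y_k\setminus Y_{k-1}$ in its (unique) component $Y_{k,j}$, and put $Y_{k,j}^\circ := Y_{k,j}\setminus Y_{k-1}$. This is open in $Y_{k,j}$, connected (irreducible minus a proper closed analytic subset), and a union of connected components of the smooth stratum $Y_k\setminus Y_{k-1}$, hence smooth. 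Lemma~3.2 applies to the finite map $\pi\colon \pi^{-1}(Y_{k,j}^\circ)\to Y_{k,j}^\circ$, and since $\pi^*\theta=df$ is exact on the source, it yields $\theta|_{Y_{k,j}^\circ} = dg^\circ$ for some continuous $g^\circ$. On each connected component $C_i$ of $\pi^{-1}(Y_{k,j}^\circ)$ this means $f = g^\circ\circ \pi + c_i$ for a constant $c_i$.

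The heart of the proof is to show all $c_i$ coincide. Each $\pi|_{C_i}$ is proper with image of dimension $k$, hence surjects onto the irreducible $Y_{k,j}^\circ$; together with density of $\pi^{-1}(Y_{k,j}^\circ)$ in $\pi^{-1}(Y_{k,j})$ this gives $\pi^{-1}(Y_{k,j}) = \bigcup_i \overline{C_i}$. The hypothesis that $\pi^{-1}(Y_{k,j})$ is connected therefore forces the adjacency graph of the $\overline{C_i}$'s (edges whenever $\overline{C_i}\cap \overline{C_{i'}}\neq \emptyset$) to be connected. For two adjacent components, pick $\widehat{z}\in \overline{C_i}\cap \overline{C_{i'}}$ with $z=\pi(\widehat{z})\in Y_{k,j}\cap Y_{k-1}$; choose $\widehat{z}_n\in C_i$ with $\widehat{z}_n\to \widehat{z}$ and then, using surjectivity of $\pi|_{C_{i'}}$, $\widehat{w}_n\in C_{i'}$ with $\pi(\widehat{w}_n)=\pi(\widehat{z}_n)$. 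By compactness of $\widehat{X}$, a subsequence $\widehat{w}_{n_m}\to \widehat{w}\in \pi^{-1}(z)$. The constant difference $f(\widehat{z}_n)-f(\widehat{w}_n) = c_i - c_{i'}$ has limit $f(\widehat{z})-f(\widehat{w})$ along $n_m$, which vanishes by $H(k-1)$ since $z\in Y_{k-1}$. Hence $c_i=c_{i'}$, and chaining along the connected adjacency graph gives all $c_i$ equal, completing the induction.

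The main obstacle, in my view, is precisely this diagonal-sequence argument: the primitive $g^\circ$ need not extend continuously to the boundary $Y_{k,j}\cap Y_{k-1}$, so limits of $g^\circ$ from two different components cannot be compared directly; the trick is to lift the \emph{same} sequence in $Y_{k,j}^\circ$ into both $C_i$ and $C_{i'}$, thereby cancelling $g^\circ$ in the difference $f(\widehat{z}_n)-f(\widehat{w}_n)$ and reducing everything to the inductive fiber-constancy of $f$ on the lower stratum.
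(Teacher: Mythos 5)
Your proposal follows the same route as the paper's proof: pass to the normalization, use V\^aj\^aitu's theorem and the locally irreducible Vaisman theorem to get $\pi^*\theta = df$ upstairs, then prove fiber-constancy of $f$ by upward induction on the strata, applying Lemma 3.2 to $\pi^{-1}(Y_{k,j}\setminus Y_{k-1}) \to Y_{k,j}\setminus Y_{k-1}$ to produce a local primitive downstairs and matching the resulting constants using the connectivity hypothesis. The only real divergence is in how the constants on different components are identified: the paper extends $f$ continuously to all of $Y_{k,j}$ via one irreducible component of $\pi^{-1}(Y_{k,j})$ and then invokes connectedness of $\pi^{-1}(Y_{k,j})$ directly, whereas you set up an adjacency graph of the closures $\overline{C_i}$ and compare constants by a diagonal-sequence argument over a boundary point in $Y_{k-1}$. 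Your version is, if anything, more explicit about the step the paper treats tersely.

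One step is imprecise as written: the claim that $\pi^{-1}(Y_{k,j}) = \bigcup_i \overline{C_i}$, i.e.\ that $\pi^{-1}(Y_{k,j}^\circ)$ is dense in $\pi^{-1}(Y_{k,j})$. This can fail: $\pi^{-1}(Y_{k,j})$ may have irreducible components of dimension $<k$, which necessarily lie entirely over $Y_{k-1}$ and hence meet no $\overline{C_i}$ outside a thin set; such components can also act as bridges, so connectedness of $\pi^{-1}(Y_{k,j})$ does not by itself force the adjacency graph of the $\overline{C_i}$ to be connected. The repair is immediate and uses only your own mechanism: if there are at least two components $C_i$, then $Y_{k,j}\cap Y_{k-1}\neq\emptyset$ (otherwise $\pi^{-1}(Y_{k,j})=\pi^{-1}(Y_{k,j}^\circ)$ would be disconnected), and since each $\pi|_{\overline{C_i}}$ is proper with image the closed $k$-dimensional analytic set $\overline{Y_{k,j}^\circ}=Y_{k,j}$, every $\overline{C_i}$ meets $\pi^{-1}(z)$ for any fixed $z\in Y_{k,j}\cap Y_{k-1}$. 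Running your diagonal-sequence comparison over this single $z$ for an arbitrary pair $(C_i, C_{i'})$ then gives $c_i=c_{i'}$ for all $i, i'$ directly, with no adjacency graph needed. With that adjustment the argument is complete.
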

	\begin{proof} First, we assume that for every $k$ and $j$, the set $\pi^{-1}(Y_{k,j})$ is connected. We know that $(X,\omega_0)$ is a \K\ space, and assume also that we have another metric $\omega$ on $X$ such that $(X,\omega,\theta)$ is an lcK space. Then, since the normalization mapping $\pi$ has finite fibers, by \ref{Vajaitu_discrete_fibers}, $\widehat{X}$ admits a \K\ metric, and pulling back the metric $\omega$, we obtain that $(\widehat{X}, \pi^*\omega, \pi^*\theta)$ is a wlcK space. \ref{Vaisman_wlck} then yields that $\pi^*\omega$ is wgcK, thus $\pi^*\theta=d\widehat{f}$. 
		
	Next, we show that $\widehat{f}$ is constant on the fibers of $\pi$. We do this by induction on $k$, which is the dimension of the stratum. For the verification step, take $x\in Y_0$. By our assumption, $\pi^{-1}(x)$ is connected, hence it consists of only one point and $\widehat{f}$ is trivially constant on this fiber. For the induction step, assume that $\widehat{f}$ is constant on the fibers above $Y_{k-1}$. Denote 
	$$\pi^{-1}(Y_{k,j})=\widehat{Y}_{k,j}=\bigcup_{l}\widehat{Y}_{k,j,l}$$ 
	the decomposition into irreducible components of $\widehat{Y}_{k,j}$.
	$\pi$ is holomorphic and finite, hence the direct image of an analytic set is also analytic, and it preserves the dimension when taking direct or inverse images of analytic sets. 
	If $\dim \widehat{Y}_{k,j,l}<k$, then $\dim\pi(\widehat{Y}_{k,j,l})<k$. By the construction of the stratification, this means that $\pi(\widehat{Y}_{k,j,l})\subset Y_{k-1}$, thus $\widehat{Y}_{k,j,l} \setminus \pi^{-1}(Y_{k-1})=\emptyset$, and there is nothing to prove about $\widehat{f}$ on the fibers of $\pi$ restricted to this set, so we are left with studying the case in which there exists an index $l$ such that $\dim \widehat{Y}_{k,j,l}=k$.
	$\pi(\widehat{Y}_{k,j,l})$ is analytic in $Y_{k,j}$ and of the same dimension $k$. Further, $Y_{k,j}$ being globally irreducible yields $\pi(\widehat{Y}_{k,j,l})=Y_{k,j}$. Also, 
	$$\pi:\widehat{Y}_{k,j,l} \setminus \pi^{-1}(Y_{k-1})\rightarrow Y_{k,j}\setminus Y_{k-1}$$ 
	is a (possibly ramified) covering with finite number of sheets of $Y_{k,j}\setminus Y_{k-1}$, which is a space with no singularities. Applying \ref{key_lemma_2}, we get that $\theta_{\restriction Y_{k,j}\setminus Y_{k-1}}=df$. Then, we have
	$$d\widehat{f}=\pi^*\theta=\pi^*df=d(f\circ \pi) \hspace{10pt} \text{on} \hspace{10pt} \widehat{Y}_{k,j}\setminus \pi^{-1}(Y_{k-1}),$$ 
	hence $\widehat{f}-f\circ\pi$ is constant on the connected set $\widehat{Y}_{k,j,l}\setminus \pi^{-1}(Y_{k-1})$. By continuity, $\widehat{f}_{\restriction \widehat{Y}_{k,j,l}}$ is constant on the fibers, therefore $f$ extends continuously to $Y_{k,j}$. Recalling now that $\widehat{Y}_{k,j}$ is connected, we obtain that $\widehat{f}-f\circ\pi$ is constant on $\widehat{Y}_{k,j}$. Since this is true for any $j$, we get that $\widehat{f}$ is constant on the fibers above $Y_k$, ending the proof of the induction step. 
	
	Finally, we know that $\pi^*\theta=d\widehat{f}$ and $\widehat{f}$ is constant on the fibers of $\pi$, hence $\theta=df$, where $f$ is any function such that $\widehat{f}-f\circ\pi=c\in\mathbb{R}$ on $X$, which means that $(X,\omega,\theta)$ is gcK.
    \end{proof}

	\section{Some examples}\label{sec:examples}
	
	In this section, we present two examples of locally reducible complex spaces: one for which Vaisman's theorem holds, and one for which it fails. They are both obtained by identifying two biholomorphic submanifolds in a complex manifold.
	
	\begin{example}
		We consider the projective space $\mathbb{P}^2$ with the homogenous coordinates $[z_0:z_1:z_2]$, on which we take the Fubini-Study metric $\omega_{\FS}=\ei \del\overline{\del} \log \|z\|^2$. Then, consider the submanifolds $Z_1=\{[z_0:0:z_2] \}\simeq \mathbb{P}^1$ and $Z_2=\{[z_0:z_1:0] \}\simeq \mathbb{P}^1$. We have $Z_1\cap Z_2=\{[1:0:0]\}$.
		Next, we define $f:Z_1\rightarrow Z_2$, $f([z_0:0:z_2])=[z_0:z_2:0]$,
		which is a biholomorphism with one fixed point, $[1:0:0]=:x_0$. 
		Then, we define $$X=\faktor{\mathbb{P}^2}{x \sim f(x)}, \hspace{5pt} \text{for any} \hspace{5pt} x\in Z_1,$$
		and denote $\pi:\mathbb{P}^2\rightarrow X$ the projection.  
		By \ref{Cartan_criterion}, $X$ is a complex space. Indeed, if $x\in Z_1\cup Z_2\setminus \{x_0\}$, then around $\pi(x)$, $X$ is isomorphic as ringed spaces to 
		$$V=\{(v_1,v_2,0)\mid |v_1|<1,|v_2|<1\}\cup \{(v_1,0,v_3)\mid |v_1|<1, |v_3|<1 \}\subset\mathbb{C}^3,$$
		so it is a complex space.
		It remains to study $X$ around $\pi(x_0)$, where it is isomorphic to $\faktor{P}{\sim}$, where $P\subset\mathbb{C}^2$ is the unit polydisc and $(v,0)\sim(0,v)$ for any $|v|<1$. The function $f(v_1,v_2)=v_1+v_2$ is invariant to $\sim$, and $f(v_1,0)\neq f(v_2,0)$ for any $v_1\neq v_2$. Also, the function $g(v_1,v_2)=v_1v_2h(v_1,v_2)$, is invariant to $\sim$ for any holomorphic $h$. For any $(v_1,v_2),(w_1,w_2)\in P$ which are not in the same class, and with at least one of the products $v_1v_2$ and $w_1w_2$ non-zero, we cand find a holomorphic $h$ such that $g(v_1,v_2)\neq g(w_1,w_2)$. 
		
		Furthermore, the metric $\omega_{\FS}$ on $\mathbb{P}^2$ can be defined by strictly psh functions which agree on each $\sim$-class. By \ref{PROP-Coltoiu-Mihalache-s-psh}, these functions descend to a strictly psh functions on $X$, thus $X$ is a compact \K\ space. Since the conditions in \ref{MAIN_THM} are verified by $X$, Vaisman's theorem holds on $X$.

	\end{example}
	
	\vspace{10pt}
	
\begin{example}
	This is in fact a generalization of \cite[Example 4.5]{PS23}, showing a typical scenario for which Vaisman's theorem doesn't hold:
	
	Take a compact \K \ manifold $(M, \omega)$, $\dim_\C M = n$ and let $\tilde{M}$ be the blow-up of $M$ at two distinct points; denote by $Z_1, Z_2 \subset \tilde{M}$ the exceptional divisors, $Z_1 \simeq Z_2 \simeq \mathbb{P}^{n-1}$, and $f: Z_2 \rightarrow Z_1$ a biholomorphism between them. Let $\widetilde{\omega}$ be the \K \ form on $\widetilde{M}$. Now define the complex spaces
	\[
	\tilde{X} = \faktor{\tilde{M} \times \Z}{\sim}, \text{ where } (z, k) \sim (f(z), k + 1) \text{ for any } z \in Z_2, k \in \Z,
	\]        
	and also $X$ obtained by simply glueing $Z_1$ and $Z_2$ via $f$ on $\tilde{M}$. Clearly $\tilde{X}$ is the universal cover of $X$ with Deck group $\Z$. Moreover, we can assume the \K \ metric on $\tilde{M}$ to be any multiple of the Fubini-Study metric $\omega_{FS}$ on the exceptional divisors (for the technical details about the construction of a metric on the blow-up in a point, see \eg \cite[pp. 182-189]{griffithsharris}), so let $\alpha, \beta$ be two metrics such that $\alpha_{\restriction Z_1} = \beta_{\restriction Z_1} = \omega_{FS}$, $\alpha_{\restriction Z_2} = \omega_{FS}$ and $\beta_{\restriction Z_2} = 2\omega_{FS}$. Accordingly we can consider two \K \ structures on $\tilde{X}$:
    \begin{itemize}
	\item $\tilde{\omega}$ such that $\tilde{\omega}_{\tilde{M} \times \{ k \}} = \alpha $ for any $k \in \Z$;
	\item $\tilde{\Omega}$ such that $\tilde{\Omega}_{\tilde{M} \times \{ k \}} = 2^k \beta $ for any $k \in \Z$;
    \end{itemize}
    note that this glueing makes sense using \ref{COR-Coltoiu-Mihalache-s-psh}. Since the metric $\tilde{\omega}$ is just invariant under the action of $\Z$ on $\tilde{X} \to X$, so it descends to a \K \ metric on $X$. On the other hand, $\Z$ acts on $\tilde{\Omega}$ via homotheties, $\gamma_k^* \tilde{\Omega} = 2^k \tilde{\Omega}$, so, by \cite[Theorem 3.10]{PS21}, a metric in the conformal class of $\tilde{\Omega}$ descends to a purely lcK metric on $X$. 
	
\end{example}
	
	

\end{document}